\numberwithin{equation}{section}
\newtheorem{theorem}{Theorem} [section]
\newtheorem{proposition}[theorem]{Proposition}
\def\be{\begin{equation}}
\def\ee{\end{equation}}
\def\vp{\varphi}
\def\ve{\varepsilon}
\def\nint{\mathop{\diagup\kern-13.0pt\int}}
\def\textnoint{\matpop
{\raise1.25pt\hbox{${\ssize\diagup}$}\kern-9.5pt\int}}
\newtheorem{lemma}[theorem]{Lemma}
\begin{document}
\parskip=\medskipamount
\title{ On a paper of Erd\"os and Szekeres}
\author{J. Bourgain}
\address{Institute for Advanced Study, Princeton, NJ 08540}
\email{bourgain@math.ias.edu}
\author{M.-C. Chang}
\address{Department of Mathematics, University of California, Riverside, CA 92507}
\email{mcc@math.ucr.edu}
\begin{abstract}
Propositions 1.1 --  1.3 stated below contribute to results and certain problems considered in \cite {E-S}, on the behavior of products
$\prod^n_1 (1-z^{a_j}), 1\leq a_1\leq \cdots\leq a_n$ integers.
In the discussion below,  $\{a_1, \ldots, a_n \}$ will be either a proportional subset of $\{1, \ldots, n\}$ or a set of
large arithmetic diameter.

\end{abstract}
\maketitle

\section
{\bf Introduction}

The aim of this paper is to revisit some of the questions put forward in the paper \cite {E-S} of Erdos and Szekeres.

Following \cite {E-S}, define
$$
M(a_1, \ldots, a_n) =\max_{|z|=1} \prod^n_{i=1} |1-z^{a_i}|
\eqno{(1.1)}
$$
where we assume $a_1\leq a_2\leq\cdots\leq a_n$ positive integers (in this paper, we restrict ourselves to distinct integers $a_1<\cdots < a_n$).

Denote
$$
f(n)=\min_{a_1\leq \cdots \leq a_n} M(a_1, \ldots, a_n)\;\;\; \text { and } \;\;\;\ f_*(n) =\min_{a_1<\cdots< a_n} M(a_1, \ldots, a_n).\eqno{(1.2)}
$$
It was proven in \cite{E-S} that
$$
f(n) \geq \sqrt{2n}.\eqno{(1.3)}
$$
This lower bound remains presently still unimproved.

In the other direction, \cite {E-S} establish an upper bound
$$
f(n)<\exp (n^{1-c}) \text { for some } \ c>0.\eqno{(1.4)}
$$
Subsequent improvements were given by Atkinson \cite {A}
$$
f(n)=\exp \{O(n^{\frac 12}\log n)\}\eqno{(1.5)}
$$
and Odlyzko \cite{O}
$$
f(n) =\exp \{O(n^{\frac 13} (\log n)^{4/3})\}.\eqno{(1.6)}
$$
Also to be mentioned is a construction due to Kolountzakis (\cite {Kol2}, \cite{Kol4}) of a sequence $1<a_1<\cdots <a_n< 2n + O(\sqrt n)$ for which
$$
f_*(n)\leq M(a_1, \ldots, a_n)< \exp \{O(n^{\frac 12} \log n)\}\eqno{(1.7)}
$$
(Note that Odlyzko's construction does not come with distinct frequencies).

As shown by Atkinson \cite{A}, there is a relation between the \cite{E-S} problem and the {\it cosine-minimum problem}.

Define
$$
M_2(n) =\inf \{-\min_\theta \sum^n_{j=1} \cos a_j\theta\}\eqno{(1.8)}
$$
with infinum taken over integer sets $a_1<\cdots <a_n$.

Then
$$
\log f_*(n) < O(M_2(n)\log n).\eqno{(1.9)}
$$
The problem of determining $M_2(n)$ was put forward by Ankeny and Chowla \cite {C1} motivated by questions on zeta
functions.

It is known that $M_2(n)= O (n^{\frac 12})$ and conjectured by Chowla that in fact $M_2(n)\sim n^{\frac 12}$ \cite{C2}.
The current best lower bound is due to Ruzsa \cite{R}
$$
M_2(n)>\exp (c\sqrt{\log n})\eqno{(1.10)}
$$
for some $c>0$.

As pointed out in \cite{O}, polynomials of the form (1.1) are also of interest in connection to Schinzel's problem
\cite{S} of bounding the number of irreducible factors of a polynomial on the unit circle in
terms of its degree and $L^2$-norm.

Propositions 1.1 and 1.2 in this paper establish new results for `dense' sets $S=\{a_1<\dots <a_n\}$.
The former improves upon (1.7).

\begin
{proposition}
There is a subset $\{a_1<\cdots< a_n\} \subset \{1, \ldots, N\}, n\asymp \frac N2$, such that
$$
M(a_1, \ldots, a_n)< \exp(c\sqrt n\sqrt{\log n} \log\log n).\eqno{(1.11)}
$$
\end{proposition}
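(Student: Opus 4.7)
The plan is to take $S = \{a_1 < \cdots < a_n\}$ to be a random subset of $\{1,\ldots,N\}$ obtained by including each integer independently with probability $\tfrac12$, so that $|S| \asymp N/2$ with overwhelming probability by Chernoff. Writing $P_S(\theta) := \prod_{a \in S}(1-e^{ia\theta})$, the task reduces to bounding $\sup_\theta \log|P_S(\theta)|$. For fixed $\theta$ one has $\log|P_S(\theta)| = \sum_{a=1}^N \eta_a L_a(\theta)$ with $L_a(\theta) := \log|1-e^{ia\theta}|$ and $\eta_a$ i.i.d.\ Bernoulli$(\tfrac12)$; its mean is $\tfrac12 F(\theta)$ where $F(\theta) := \sum_{a=1}^N L_a(\theta)$.

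First, I would bound $\sup_\theta F(\theta)$ using the Fourier expansion $L_a(\theta) = -\sum_{k \geq 1} \cos(ka\theta)/k$ together with standard Dirichlet-kernel estimates; near a rational $\theta = 2\pi p/q$ the multiples of $q$ in $[1,N]$ force $F$ to be very negative, so the supremum lives in a generic regime where cancellation is effective. Next, to obtain concentration of $\sum_a \eta_a L_a(\theta)$ around its mean, I would truncate at level $K \sim \log N$ (replacing $L_a$ by $\max(L_a,-K)$) and apply a Hoeffding/Bernstein inequality to the bounded sum, yielding Gaussian-type deviations of order $K\sqrt{N\log(\text{net size})}$; the deterministic tail from the few $a$'s with $\|a\theta/2\pi\| < e^{-K}$ is controlled by counting how many $a \leq N$ are so well-approximated by a given rational. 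Finally, a $\theta$-net of size polynomial in $N$, justified by a Lipschitz estimate on $\log|P_S(\theta)|$ away from its zeros, absorbs the supremum into a union bound.

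The delicate part is balancing the truncation threshold $K$, the net density, and the residual contribution of near-rational $\theta$ so as to end up with $\exp(c\sqrt{n\log n}\,\log\log n)$ rather than Kolountzakis's $\exp(O(\sqrt n\log n))$; the $\log\log n$ gain should come from the precise asymptotics of divisor-type averages (such as $\sigma(m)/m$, whose normal order is $\log\log m$) appearing in the Fourier expansion of $F$. The main obstacle I anticipate is the interplay between the logarithmic singularities of $L_a$---which block a naive application of concentration---and the near-rational spikes of $F$, which must be controlled while preserving $|S| \asymp N/2$. If the purely random construction leaves residual peaks for some exceptional $\theta$, a deterministic post-processing step that removes $o(n)$ elements from $S$ should eliminate them without disturbing the density.
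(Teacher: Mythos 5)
There is a genuine gap, and it sits at the very first step: the choice of uniform inclusion probability $\tfrac12$. With that choice the mean of $\log|P_S(\theta)|$ at a fixed $\theta$ is $\tfrac12 F(\theta)$ with $F(\theta)=\sum_{a=1}^N\log|1-e^{ia\theta}|=\log\prod_{a\le N}|1-e^{ia\theta}|$, and by the Erd\H os--Szekeres fact quoted in the paper (see (1.13)/(3.1)), $\max_\theta F(\theta)=\log M(1,\ldots,N)\sim cN$ with $c>0$. So your hoped-for bound on $\sup_\theta F(\theta)$ (``the supremum lives in a generic regime where cancellation is effective'') is false: the supremum is linear in $N$, not of size $\sqrt{N}\,$polylog. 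Concentration then works against you: at a maximizing $\theta_0$ the random variable $\sum_a\eta_aL_a(\theta_0)$ is, with probability bounded away from $0$, within $O(\sqrt N\,\mathrm{polylog}\,N)$ of its mean $\tfrac12F(\theta_0)\asymp N$, so a typical density-$\tfrac12$ random subset has $\log M(S)\gtrsim N$ --- exponentially worse than (1.11). The suggested rescue of deleting $o(n)$ elements cannot work either, since each deleted factor changes the product by at most a factor $2$, so repairing a peak of height $e^{cN}$ requires removing $\gtrsim cN/\log 2$ elements, a positive proportion.

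This is exactly the obstruction the paper's construction is designed to avoid: it takes independent selectors with \emph{non-uniform} means $\mathbb E[\xi_j]=1-\tfrac jn$, so that the expected cosine sum becomes (up to an additive constant) the nonnegative Fej\'er kernel $\tfrac12F_n(\theta)-\tfrac12$. After the truncation of Lemma 2.1 the deterministic main term then contributes only $O(\log J)$, and the whole difficulty is shifted to the fluctuation term $\sum_\ell(\xi_\ell-\mathbb E\xi_\ell)\sum_j\frac{\rho^j}{j}\cos 2\pi j\ell\theta$, which is handled by Salem--Zygmund together with a rational-approximation case analysis of $\sum_\ell|\log|1-\rho e(\ell\theta)||^2$ (with a separate direct argument for $\theta$ extremely close to rationals with small denominator). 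That analysis, not divisor-sum averages like $\sigma(m)/m$, is where the $\sqrt{\log n}\,\log\log n$ factor in (1.11) comes from. Your truncation/net/union-bound machinery is compatible with this framework, but without replacing the uniform density $\tfrac12$ by the Fej\'er-type weights (or some other device that makes the expected profile harmless while keeping $|S|\asymp N/2$), the argument cannot reach any bound of the form $\exp(n^{1-\delta})$, let alone (1.11).
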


On the other hand, the following holds

\begin{proposition}
There is a constant $\tau>0$ such that if $\{a_1< \ldots < a_n\}\subset\{1, \ldots, N\}$ and
$n> (1-\tau)N$, then
$$
M(a_1, \ldots , a_n)>\exp \tau n.\eqno{(1.12)}
$$
\end{proposition}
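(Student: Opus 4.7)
The plan is to evaluate $\prod_{a\in S}|1-z^a|$ at a $p$-th root of unity $\zeta=e^{2\pi i/p}$ for an integer $p$ slightly larger than $N$, exploiting the classical identity $\prod_{k=1}^{p-1}|1-\zeta^k|=p$. Setting $T=\{1,\dots,N\}\setminus S$ so that $|T|=N-n<\tau N$, one has
$$
\prod_{a\in S}|1-\zeta^a|=\frac{\prod_{k=1}^N|1-\zeta^k|}{\prod_{b\in T}|1-\zeta^b|}\ge 2^{-\tau N}\prod_{k=1}^N|1-\zeta^k|,
$$
since every factor in the denominator is at most $2$. Thus it suffices to produce a constant $c_1>0$ and an integer $p>N$ with $\log\prod_{k=1}^N|1-\zeta^k|\ge c_1 N$; choosing $\tau>0$ small enough that $c_1>\tau(1+\log 2)$ will then yield $\log\prod_{a\in S}|1-\zeta^a|>\tau N\ge\tau n$.

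Combining the cyclotomic identity with the symmetry $\sin(\pi(p-j)/p)=\sin(\pi j/p)$ gives
$$
\prod_{k=1}^N|1-\zeta^k|=\frac{p}{\prod_{j=1}^{p-N-1}2\sin(\pi j/p)}.
$$
Fixing $\alpha\in(0,1/2)$ and picking $p\sim N/(1-\alpha)$, a Riemann-sum approximation produces
$$
\sum_{j=1}^{p-N-1}\log\bigl(2\sin(\pi j/p)\bigr)=p\,h(\alpha)+O(\log p),\qquad h(\alpha):=\int_0^\alpha\log(2\sin\pi x)\,dx,
$$
where $h(\alpha)<0$ on $(0,1/2)$, attaining its minimum at $\alpha=1/6$ with $h(1/6)=-\mathrm{Cl}_2(\pi/3)/(2\pi)\approx -0.162$. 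Hence $\log\prod_{k=1}^N|1-\zeta^k|=\log p-p\,h(\alpha)+O(\log p)\ge c_1 N+O(\log N)$ with $c_1:=-h(\alpha)/(1-\alpha)>0$ (roughly $c_1\approx 0.19$ at $\alpha=1/6$), which is the required lower bound on the numerator.

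The one technical point is the Riemann-sum estimate, which must handle the integrable logarithmic singularity of $\log(2\sin\pi x)$ at $x=0$. The $j=1$ term contributes $-\log p+O(1)$, balancing $p\int_0^{1/p}\log(2\sin\pi x)\,dx=-\log p+O(1)$; on the remaining intervals the derivative bound $|(\log(2\sin\pi x))'|=\pi|\cot\pi x|=O(1/x)$ yields per-interval error $O(1/j)$ and total error $\sum_{j\ge 2}^{p-N-1}O(1/j)=O(\log p)=o(N)$. Combining this with the denominator bound gives
$$
\log\prod_{a\in S}|1-\zeta^a|\ge(c_1-\tau\log 2)N+o(N)>\tau n
$$
for $\tau$ sufficiently small, which is $(1.12)$.
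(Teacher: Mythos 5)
Your proof is correct, but it follows a genuinely different route from the paper. You evaluate the product at a single root of unity $\zeta=e^{2\pi i/p}$ with $p\approx N/(1-\alpha)$, use the cyclotomic identity $\prod_{k=1}^{p-1}|1-\zeta^k|=p$ together with the symmetry of $2\sin(\pi j/p)$ to show that $\prod_{k=1}^{N}|1-\zeta^k|\geq e^{c_1N}$ via a Riemann-sum estimate for $\int_0^\alpha\log(2\sin\pi x)\,dx<0$ (your handling of the logarithmic singularity at $j=1$ and the $O(1/j)$ per-interval errors is fine), and then dispose of the missing elements trivially, each deleted factor costing at most $2$, so at most $2^{\tau N}$ in total. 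The paper instead works on the Fourier side: it uses Fact 2 with a Fej\'er-kernel convolution $\mu=F_{nR}$, evaluates the resulting cosine sum at the specific point $\theta=\tfrac{3}{4n}$, splits the $k$-summation into $k=1$, $2\le k\le k_0$ and $k>k_0$ (Dirichlet-kernel estimates for the middle range, the Fej\'er cutoff for the tail), and absorbs the complement of $S$ into a term of size $\tau(\log k_0)\pi n$, finally choosing $k_0$ large and $\tau$ small. Your argument is more elementary and self-contained, and it yields explicit constants (any $\tau$ somewhat below $|h(1/6)|/((1-\tfrac16)(1+\log 2))$ works asymptotically), essentially re-deriving a quantitative form of the Erd\H os--Szekeres/Sudler fact that $M(1,\ldots,N)^{1/N}$ is bounded below by a constant $>1$; the paper's argument stays within the cosine-sum framework (Fact 2) that it reuses elsewhere and needs no product identities. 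One cosmetic point in your write-up: the $o(N)$ and $O(\log N)$ errors make the final inequality valid only for $N$ larger than some absolute $N_0$; for the finitely many smaller $N$ one simply shrinks $\tau$ and invokes $M\ge\sqrt{2n}$ from (1.3), so this is not a gap, and likewise ``$p\sim N/(1-\alpha)$'' should be read as $p=\lceil N/(1-\alpha)\rceil$, which changes $h(\alpha)$ only by $O(1/p)$ in the relevant estimate.
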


The latter result generalizes the comment made in \cite {E-S} that
$$
\lim_{n\to \infty} [M(1, 2, \ldots, n)]^{1/n}
\eqno{(1.13)}
$$
exists and is between 1 and 2.

In converse direction, one may prove new lower bounds on $M(a_1, \ldots, a_n)$ assuming that the set $\{a_1< \cdots < a_n\}$
has a sufficiently large arithmetic diameter.

First, we are recalling the notion of  a `{\it dissociated set}'
of integers.
We say that $D=\{\nu_1, \ldots, \nu_m\}\subset\mathbb Z$ is dissociated provided $D$ does not admit non-trivial
$0, 1, -1$ relations.
Thus
$$
\ve_1\nu_1+\cdots+\ve_m \nu_m =0 \ \text { with } \ \ve_1=0, 1, -1\eqno{(1.14)}
$$
implies
$$
\ve_1 =\cdots = \ve_m=0.
$$

A more detailed discussion of this notion and its relation to lacunarity appears in \S5 of the paper.

\begin{proposition}
Assume $\{a_1 <\cdots < a_n\}$ contains a dissociated set of size $m$. Then
$$
\log M(a_1, \ldots, a_n)\gg \frac {m^{\frac 12-\ve}}{(\log n)^{1/2}}.\eqno{(1.15)}
$$
\end{proposition}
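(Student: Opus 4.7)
The plan is to lower-bound $\log M(a_1,\ldots,a_n)$ by integrating $\log|P(e^{i\theta})|$, with $P(z)=\prod_{j}(1-z^{a_j})$, against a Riesz-product probability density tailored to the dissociated set $D=\{\nu_1,\ldots,\nu_m\}$. Concretely, I would choose a subset $D'\subseteq D$ of size $m'$ (to be optimized) and form
$$
R(\theta)=\prod_{\nu\in D'}\bigl(1-\cos(\nu\theta)\bigr),
$$
which, by dissociation of $D'$, satisfies $R\ge 0$ and $\int R\,d\theta/(2\pi)=1$. Since $\log|P(e^{i\theta})|\le\log M$ pointwise,
$$
\log M\ \ge\ \int\log|P(e^{i\theta})|\,R(\theta)\,\frac{d\theta}{2\pi}.
$$

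Expanding $\log|1-e^{i\phi}|=-\sum_{k\ge 1}k^{-1}\cos(k\phi)$ and applying Parseval, the right-hand side equals $-\sum_{j=1}^n\sum_{k\ge 1}k^{-1}\widehat R(ka_j)$. The Fourier support of $R$ is $\Sigma=\{\sum_{\nu\in D'}\ve_\nu\nu:\ve\in\{-1,0,1\}^{D'}\}$, and $\widehat R(\sigma)=(-1/2)^{w(\sigma)}$, where $w(\sigma)$ denotes the weight of the (unique, by dissociation) representation of $\sigma$. The diagonal terms $(k=1,\,a_j\in D')$ already contribute $m'/2$; odd-weight resonances $ka_j=\sigma$ with $w(\sigma)\ge 3$ are favorable, while the enemy is the even-weight resonances $w(\sigma)\ge 2$, which contribute negatively with magnitude $2^{-w}k^{-1}$.

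The crux is to choose $D'$ so the negative error is at most $m'/4$. I would do this probabilistically: for $D'$ uniform among size-$m'$ subsets of $D$, a given weight-$w$ relation $ka_j=\ve_1\nu_{i_1}+\cdots+\ve_w\nu_{i_w}$ survives with probability $\le(m'/m)^w$. Combined with a divisor bound $\tau_S(\sigma)\le\sigma^{o(1)}$ to count $(j,k)$ with $ka_j=\sigma$, and a truncation of the Fourier series at $k\le K\asymp\sqrt{\log n}$ (whose tail is controlled by the integrability of $\log|P|$ and the high-order vanishing of $R$ at the zeros of $P$), a first-moment computation produces an expected error $\ll m'$ provided $m'\lesssim m^{1/2-\ve}/\sqrt{\log n}$. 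Fixing a realization of $D'$ attaining this bound and taking $m'$ at the threshold gives $\log M\ge m'/4\gg m^{1/2-\ve}/(\log n)^{1/2}$, as required.

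The main obstacle is this combinatorial/second-moment estimate on the error. Dissociation of $D$ prohibits any weight-$\ge 2$ relation \emph{purely inside} $D$, but elements of $\{a_1,\ldots,a_n\}\setminus D$ can participate in many mixed relations $ka_j=\ve_1\nu_{i_1}+\cdots+\ve_w\nu_{i_w}$; controlling their cumulative effect through the probabilistic method together with divisor bounds and the Fourier truncation is responsible both for the square-root loss $m\mapsto m^{1/2-\ve}$ (intrinsic to the second-moment argument) and for the $(\log n)^{-1/2}$ factor (coming from the truncation threshold in the Fourier series of $\log|1-e^{i\phi}|$).
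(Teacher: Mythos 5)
Your Riesz-product strategy has a genuine gap at its central step: the claim that a first-moment computation over random subsets $D'\subset D$ of size $m'$ makes the even-weight resonance error $\ll m'$ once $m'\lesssim m^{1/2-\ve}/\sqrt{\log n}$. The number of mixed resonances $k a_j=\ve_1\nu_{i_1}+\cdots+\ve_w\nu_{i_w}$ is governed by how many of the $a_j$ \emph{outside} $D$ land in the spectrum of $D$, i.e.\ by a quantity of size up to $n$, not $m$, and nothing in the dissociation hypothesis limits it. Concretely, take $\nu_i=B^i$ with $B$ huge and let $S$ contain $D$ together with all pairwise sums $\nu_i+\nu_{i'}$; then $n\asymp m^2$, the largest dissociated subset of $S$ still has size $\Theta(m)$ (a relation among pairwise sums corresponds, in the carry-free regime, to a signed edge-weighting with zero vertex sums, so dissociated families of sums correspond to even-cycle-free graphs and have $O(m)$ elements), yet for \emph{every} choice of $D'$ of size $m'$ the weight-two terms of $\prod_{\nu\in D'}(1-\cos\nu\theta)$ contribute exactly $-\tbinom{m'}{2}/4$ (all with $k=1$), swamping the main term $m'/2$ as soon as $m'\gtrsim 5$. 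So the expected error is $\sim m'^2$, independent of the ratio $m'/m$, the claimed threshold is false, and the method caps at an $O(1)$ bound in such examples no matter how the random selection is performed. In addition, the truncation at $k\le K\asymp\sqrt{\log n}$ is not justified: the tail $-\sum_{k>K}k^{-1}\cos(2\pi k a_j\theta)$ is not pointwise small, and the Riesz product $R$ has no vanishing at the zeros of $P$, so the large-$k$ resonances are not disposed of by the reasons you give; handling them is precisely the hard part of the problem.

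The paper's proof is structured quite differently and is worth comparing against. Instead of a Riesz product it tests against the \emph{linear} exponential sum $\vp=\frac1{\sqrt m}\sum_{j\in D}e(j\theta)$, which sees only frequencies in $D$ and thus avoids the mixed-resonance problem entirely; the price of $\vp$ not being a probability density is paid through $\Vert F\Vert_1$ and Orlicz duality ($\vp$ is subgaussian, $\Vert\vp\Vert_{L^\Psi}=O(1)$ with $\Psi(x)=e^{x^2}$, against $\Phi(x)=x\sqrt{\log(2+x)}$), which is where the $(\log n)^{1/2}$ loss actually comes from. The large-$k$ tail is handled by a finite M\"obius inversion recovering $f(\theta)=\sum_j\cos(2\pi a_j\theta)$ from $F$ up to an error $G$, and the key technical point --- absent from your sketch --- is the bound $\Vert\hat G\Vert_\infty\lesssim 1/\log r$, proved by showing via the Hildebrand--Tenenbaum smooth-number estimate that if too many $a_j$ divided $t$ in a dyadic range one could extract a dissociated subset of size exceeding $m$; the choice $\log r\sim m^{2\alpha}$ is what produces the $m^{-\ve}$ in the exponent. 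You would need an analogue of this mechanism (some way to convert the maximality of $m$ into a bound on the number of harmful resonances) before any version of your plan could go through.
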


Hence (1.15) improves upon (1.3) as soon as
$$
m\gg (\log n)^{3+\ve}.\eqno{(1.16)}
$$

\section
{\bf Preliminary estimates}

Let$$
z=e(\theta)= e^{2\pi i\theta}.
$$
By taking the real part of $\rm{Log}(1-e^{2\pi i\theta})=-\sum_{k=1}^{\infty}\frac 1ke^{2\pi ik\theta}$, we have$$
\log |1-z|=  -\sum^\infty_{k=1} \frac {\cos 2\pi k\theta}k.
$$
Therefore, we have

\noindent
{\bf Fact 1.}$$
\prod^n_{j=1} |1-z^{a_j}| = e^{-\sum^n_{j=1} \sum^\infty_{k=1} \ \frac {\cos 2\pi ka_j\theta}k}.\qquad\qquad\qquad\qquad\qquad\qquad
$$

We first establish some preliminary inequalities for later use.

Since the function $e^x$ is convex, we obtain for any probability measure $\mu$ on $\mathbb T$  that
$$
\prod^n_{j=1} |1- e(a_j\theta)|* \mu \geq e^{-(\sum^n_{j=1} \sum^\infty_{k=1} \ \frac {\cos 2\pi k a_j\;\cdot\;}k)*\mu(\theta)}
$$
and therefore we have

\noindent
{\bf Fact 2.}
$$
\Big\Vert \prod^n_{j=1} |1-e(a_j\theta)|\Big\Vert_\infty \geq e^{-\underset{\theta}\min \{\sum^n_{j=1}\sum^\infty_{k=1} \frac
{\cos 2\pi k a_j\;\cdot\;}k *\mu\}(\theta)}.\qquad\qquad\qquad\qquad\qquad
$$

\begin{lemma}
$$
\log |1-e^{2\pi i\theta}|\leq -\sum^J_{j=1} \frac {\rho^j} j \cos 2\pi j\theta +O\Big(\frac
1{\sqrt J}\Big)\eqno{(2.1)}
$$
where $\rho =1 -\frac 1{\sqrt J}$ and (2.1) is valid for all $\theta$.
\end{lemma}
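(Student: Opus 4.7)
The plan is to compare $\log|1-e^{2\pi i\theta}|$ to $\log|1-\rho e^{2\pi i\theta}|$ for a parameter $\rho<1$ chosen slightly below $1$, and then use the convergent Taylor expansion of the latter around the origin, truncated at $J$. The optimal choice of $\rho$ will make two error terms of comparable size, which is what forces $\rho=1-1/\sqrt J$.

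First, I would establish the pointwise comparison. A direct computation gives
\[
|1-\rho e^{2\pi i\theta}|^{2}-\rho\,|1-e^{2\pi i\theta}|^{2}=(1-\rho)^{2}\ge 0,
\]
so $|1-\rho e^{2\pi i\theta}|\ge \sqrt{\rho}\,|1-e^{2\pi i\theta}|$, which yields
\[
\log|1-e^{2\pi i\theta}|\le \log|1-\rho e^{2\pi i\theta}|-\tfrac{1}{2}\log\rho.
\]
With $\rho=1-1/\sqrt J$, one has $-\tfrac{1}{2}\log\rho=O(1/\sqrt J)$, absorbed into the error.

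Second, since $\rho<1$, the principal branch of $\log(1-\rho e^{2\pi i\theta})$ is given by its absolutely convergent Taylor series, and taking real parts,
\[
\log|1-\rho e^{2\pi i\theta}|=-\sum_{k=1}^{\infty}\frac{\rho^{k}\cos 2\pi k\theta}{k}.
\]
Truncate at $J$ and estimate the tail crudely:
\[
\Bigl|\sum_{k>J}\frac{\rho^{k}\cos 2\pi k\theta}{k}\Bigr|\le\frac{1}{J}\sum_{k>J}\rho^{k}=\frac{\rho^{J+1}}{J(1-\rho)}=\frac{\rho^{J+1}}{\sqrt J}.
\]
Since $\rho^{J}=(1-1/\sqrt J)^{J}\le e^{-\sqrt J}$, this is $O(e^{-\sqrt J}/\sqrt J)=O(1/\sqrt J)$. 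Combining with the first step yields (2.1).

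There is not really a hard step here; the content of the lemma is the balancing choice $\rho=1-1/\sqrt J$, made so that the ``moving inside the disk'' error $-\tfrac{1}{2}\log\rho\asymp 1/\sqrt J$ matches the tail error $(1-\rho)^{-1}J^{-1}\rho^{J}\ll 1/\sqrt J$. Any other $\rho=1-\delta$ would give one of the two errors of order $\delta$ and the other of order $\delta^{-1}J^{-1}$ (up to exponential savings), so $\delta=1/\sqrt J$ is exactly the optimum. The only thing to watch is that the comparison inequality for $|1-\rho e^{2\pi i\theta}|$ must be a genuine pointwise bound valid for \emph{all} $\theta$, including $\theta=0$ where $\log|1-e^{2\pi i\theta}|=-\infty$; the identity $|1-\rho e^{2\pi i\theta}|^{2}=\rho\,|1-e^{2\pi i\theta}|^{2}+(1-\rho)^{2}$ handles this case automatically since the left side stays bounded away from zero.
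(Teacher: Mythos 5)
Your proof is correct and follows essentially the same route as the paper: compare $\log|1-e^{2\pi i\theta}|$ with $\log|1-\rho e^{2\pi i\theta}|$, expand the latter in its Taylor series, truncate at $J$ with a geometric tail bound of size $\rho^{J}/(J(1-\rho))$, and balance the two errors at $\rho=1-1/\sqrt J$. The only (harmless) difference is in the comparison step: the paper quotes Odlyzko's inequality $\bigl|\tfrac{1-e^{i\theta}}{1-\rho e^{i\theta}}\bigr|\le\tfrac{2}{1+\rho}$, whereas you derive the slightly weaker but fully sufficient bound $|1-e^{2\pi i\theta}|\le\rho^{-1/2}|1-\rho e^{2\pi i\theta}|$ from the elementary identity $|1-\rho e^{2\pi i\theta}|^{2}=\rho|1-e^{2\pi i\theta}|^{2}+(1-\rho)^{2}$, which makes the lemma self-contained; both comparisons contribute an $O(1-\rho)=O(1/\sqrt J)$ error, as required.
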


\begin{proof}
We rely on a calculation that appears in \cite {O}, Proposition 1.

Use the inequality \big(\cite {O}, (2.4)\big)
$$
\Big|\frac {1-e^{i\theta}}{1-\rho e^{i\theta}}\Big|\leq \frac 2{1+\rho}  \ \text { for } \
\theta \in [0, 2\pi], 0<\rho<1.\eqno{(2.2)}
$$
From (2.2)
$$
\begin{aligned}
\log |1-e^{i\theta}| &\leq \log |1-\rho e^{i\theta}|+\log \frac 2{1+\rho}\\
&= -\sum^\infty_{j=1} \frac {\rho^j}j \cos j\theta +\log \frac 2{1+\rho}\\
&\leq -\sum^J_{j=1} \frac {\rho^j}j \cos j\theta +\frac {\rho^J}{J(1-\rho)}
+C(1-\rho)
\end{aligned}
\eqno{(2.3)}
$$
by partial summation and since
$$
\log \frac 2{1+\rho}= -\log \Big(1-\frac {1-\rho}2\Big).
$$
Thus (2.1) follows from (2.3) with $\rho$ as above.

\end{proof}

\begin{proposition}
There is a subset $\{a_1 \ldots  a_m\}\subset \{1, \ldots, n\}$ of size
$$
m\asymp \frac n2
$$
and
$$
\Big\Vert \prod^m_{k=1} |1-z^{a_k}| \, \Big\Vert_{L^\infty(|z|=1)}
\leq e^{c\sqrt n \,\sqrt  {\log n})(\log\log n)}.\eqno{(2.4)}
$$
\end{proposition}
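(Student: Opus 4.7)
The strategy is a probabilistic construction combined with Lemma~2.1, which reduces the $L^\infty$ norm of the product to the supremum of a bounded trigonometric polynomial. Apply the lemma with $J\asymp n$ and $\rho = 1 - J^{-1/2}$: for any $S\subset\{1,\ldots,n\}$ of size $m$,
$$\log\prod_{k\in S}|1-e^{2\pi ik\theta}|\leq -\sum_{k\in S}F_J(k\theta)+O(m/\sqrt J),$$
where $F_J(\phi)=\sum_{j=1}^{J}(\rho^j/j)\cos(2\pi j\phi)$. For $m\asymp n/2$ the residual error is $O(\sqrt n)$, so it suffices to produce $S$ of cardinality $\asymp n/2$ with $\sum_{k\in S}F_J(k\theta)\geq -C\sqrt{n\log n}\,\log\log n$ uniformly in $\theta$.

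I would let $\xi_1,\ldots,\xi_n$ be i.i.d.\ Bernoulli$(1/2)$ and set $S=\{k:\xi_k=1\}$, so that $|S|\asymp n/2$ by Chernoff, and split
$$\sum_{k\in S}F_J(k\theta)=\underbrace{\tfrac12\sum_{k=1}^n F_J(k\theta)}_{D(\theta)}+\underbrace{\sum_{k=1}^n(\xi_k-\tfrac12)F_J(k\theta)}_{R(\theta)}.$$
For the random piece $R$, I would carry out a dyadic frequency decomposition $F_J=\sum_r F^{(r)}$ over the bands $j\in(2^{r-1},2^r]$, noting that $\|F^{(r)}\|_\infty\lesssim\rho^{2^{r-1}}$ and $\|F^{(r)}\|_2^2\lesssim\rho^{2^r}/2^r$. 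Each $R^{(r)}(\theta)$ is a trigonometric polynomial in $\theta$ of degree $\leq n\cdot 2^r$, so Hoeffding's inequality on a $1/(n2^r)$-net of $\theta$ combined with Bernstein's inequality for trigonometric polynomials yields, with high probability, $\|R^{(r)}\|_\infty\lesssim\sqrt{n\log n}\cdot 2^{-r/2}\rho^{2^{r-1}}$; summing the convergent geometric series in $r$ gives $\|R\|_\infty\lesssim\sqrt{n\log n}$.

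The main obstacle is the uniform lower bound on the deterministic piece $D(\theta)$. Using $F_J(\phi)=-\log|1-\rho e^{2\pi i\phi}|+O(J^{-1/2})$, the task reduces to the uniform estimate
$$\prod_{k=1}^n|1-\rho e^{2\pi ik\theta}|\leq\exp\bigl(C\sqrt{n\log n}\,\log\log n\bigr),$$
which is delicate, since the crude bound $|1-\rho e^{2\pi i\phi}|<2$ only yields the trivial $2^n$. I would handle this by a major/minor arc decomposition. On a major arc $\theta=p/q+\eta$ with $q\leq Q\asymp\sqrt n$, the cyclic structure of $k\mapsto e^{2\pi ikp/q}$ factors the product at $\eta=0$ via the identity $\prod_{r=0}^{q-1}(1-\rho e^{2\pi ir/q})=1-\rho^q$, giving $|1-\rho^q|^{n/q}$, which is $\leq 1$ for $q\lesssim\sqrt n$; the small-$\eta$ perturbation is controlled by the Lipschitz modulus of $F_J$. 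On minor arcs, equidistribution of $\{k\theta\}_{k=1}^{n}$ combined with the vanishing of $\int_0^1\log|1-\rho e^{2\pi i\phi}|\,d\phi$ gives $\sum_k\log|1-\rho e^{2\pi ik\theta}|=O(\sqrt{n\log n})$ by standard exponential sum bounds. The $\log\log n$ factor in the target presumably enters when aggregating the major arc contributions across the denominator scales $q=1,\ldots,Q$.

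Combining, $\sum_{k\in S}F_J(k\theta)=D(\theta)+R(\theta)\geq -C\sqrt{n\log n}\,\log\log n$ uniformly in $\theta$ with positive probability, producing the required subset $S$. The delicate point which I expect to absorb most of the work is the deterministic estimate on $\prod_{k=1}^n|1-\rho e^{2\pi ik\theta}|$, since the naive bound loses a full factor of $\sqrt n/(\sqrt{\log n}\log\log n)$.
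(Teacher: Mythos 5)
Your construction has a fatal gap at exactly the point you flag as ``the main obstacle'': the uniform bound you need on the deterministic piece is false. With i.i.d.\ Bernoulli$(1/2)$ selectors the expected part of the sum is $\tfrac12\sum_{k=1}^n\log|1-\rho e(2\pi i k\theta)|$ (up to $O(\sqrt n)$), i.e.\ essentially half the log of the full product $\prod_{k=1}^n|1-e(2\pi ik\theta)|$, since replacing $e(k\theta)$ by $\rho e(k\theta)$ with $1-\rho\asymp n^{-1/2}$ changes each factor by at most $O(1-\rho)$ and the whole sum by $O(\sqrt n)$ (this is (2.2)). But $\max_\theta\prod_{k=1}^n|1-e(2\pi ik\theta)|=e^{c_0n(1+o(1))}$ with $c_0>0$ --- this is precisely the Erd\H os--Szekeres observation (3.1), that $M(1,\dots,n)^{1/n}$ tends to a limit strictly between $1$ and $2$ --- so your target estimate $\prod_{k=1}^n|1-\rho e(2\pi ik\theta)|\leq\exp(C\sqrt{n\log n}\log\log n)$ cannot hold; the maximum is exponential in $n$, and no major/minor arc decomposition can rescue a false statement (the maximizing $\theta$ is neither a small-denominator rational nor a ``minor-arc'' point in your sense: orbit sums of $\log|1-e(\cdot)|$ are not controlled by equidistribution plus the zero mean, because the function is unbounded). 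In fact, evaluating at the maximizing $\theta^*$ shows that a Bernoulli$(1/2)$ random subset has $\log M(S)\gtrsim c_0n/2$ with high probability, so the randomization itself, not just your estimate, is wrong. The paper's key idea, which your proposal is missing, is to take \emph{non-identically distributed} selectors with $\mathbb E[\xi_j]=1-\tfrac jn$: then the expected cosine sum is $\tfrac12F_n(\theta)-\tfrac12$ with $F_n$ the (nonnegative) Fej\'er kernel, and after inserting Lemma 2.1 the deterministic contribution $-\sum_j\frac{\rho^j}{j}\big(\tfrac12F_n(j\theta)-\tfrac12\big)$ is bounded by $\log J$ outright, so that all the work is pushed onto the fluctuation term.

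A secondary problem is your treatment of the random piece. The claimed band-wise bound $\|R^{(r)}\|_\infty\lesssim\sqrt{n\log n}\,2^{-r/2}\rho^{2^{r-1}}$ silently uses $\sum_{k\leq n}F^{(r)}(k\theta)^2\lesssim n\|F^{(r)}\|_2^2$, which requires equidistribution of $\{k\theta\}$ at the scale of the band and fails for $\theta$ close to rationals with small denominator (e.g.\ $\theta=1/q$ with $q\ll 2^r$ gives an extra factor $2^r/q$); using only $\|F^{(r)}\|_\infty$ instead loses the geometric decay and yields $\sqrt n(\log n)^{3/2}$. Controlling exactly this quantity, $\sum_{\ell\leq n}\big|\log|1-\rho e(\ell\theta)|\big|^2$, uniformly in $\theta$ outside an exceptional set of $\theta$ extremely well approximated by small-denominator rationals --- and bounding the product directly on that exceptional set --- is what occupies most of the paper's proof (via the Salem--Zygmund inequality and the case analysis (2.11)--(2.26)), and it is where the $\sqrt{\log n}\,\log\log n$ loss actually comes from.
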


\noindent
{\bf Remark.}
(2.4) is a slight improvement of the estimate
$$
\Big\Vert\prod^m_{k=1} |1-z^{a_k}| \Big\Vert_{L^\infty (|z|=1)}\leq e^{c\sqrt n\log n}
$$
resulting from a construction in \cite{Kol1}, p. 162 of a set $\{a_1, \ldots, a_m\}$ as above
and such that
$$
\sum^m_{k=1} \cos 2\pi a_k\theta\geq -c\sqrt m
$$
and Lemma 2.1
$$
\begin{aligned}
\log\prod^m_{k=1}|(1-2a_k)|&\leq -\sum^J_{j=1} \frac {\rho^j}j \sum^m_{k=1} \cos 2\pi
a_k(j\theta)+O\Big(\frac m{\sqrt J}\Big)\\
&\leq C(log J)\sqrt m+O\Big(\frac m{\sqrt J}\Big)\\
&< C\log n\;\sqrt n,
\end{aligned}
$$taking $J=m^2$.

\medskip
\noindent
{\bf Proof of Proposition 2.2.}
Take independent selectors $(\xi_j)_{1\leq j<n}$ with values $0, 1$ and mean $\mathbb E[\xi_j]
=1-\frac jn$.
Let $F_n(\theta)=2\sum_{0 <j<n} (1-\frac jn) \cos 2\pi j\theta+1$ be the Fejer kernel
$$
\sum^m_{k=1} \cos a_k\theta =\sum^n_{\ell =1} \xi_\ell \cos \ell \theta = \frac 12
F_n(\theta)-\frac 12+ \sum^n_{\ell =1} (\xi_\ell -\mathbb E[\xi_\ell])\cos \ell\theta.
\eqno{(2.5)}
$$

By Lemma 2.1 (applies with $J= n^{10} $)
$$
\begin{aligned}
\sum^m_{k=1} \log |1-e^{2\pi ia_k\theta}| \leq
-\sum^J_{j=1} \sum^m_{k=1} \frac {\rho ^j}j \cos 2\pi j a_k\theta+ O\Big(\frac m{\sqrt
J}\Big)
\end{aligned}
\eqno{(2.6)}
$$
and we take $J$ at least $n$ to bound the last term in the right hand side of (2.5) by $\sqrt n$.
We analyze the first term.
Inserting (2.5) gives the sum of the following two expressions ((2.7) and (2.8))
$$
-\sum^J_{j=1} \frac {\rho^j}j \Big(\frac 12 F_n(j\theta)-\frac 12\Big)
\eqno{(2.7)}
$$
$$
-\sum^J_{j=1} \sum^n_{\ell=1} \frac {\rho^j}j (\xi_\ell -\mathbb E[\xi_\ell])\cos 2\pi
\ell j\theta.\eqno{(2.8)}
$$
Since $F_n(j\theta)\geq 0$, (2.7) $\leq \log J$.

Rewrite
$$
(2.8) = - \sum^n_{\ell=1}( \xi_\ell -\mathbb E[\xi_\ell])\Big[\sum^J_{j=1} \frac {\rho^j}j \cos
2\pi j\ell\theta\Big].\eqno{(2.9)}
$$
Note that all frequencies in (2.9) are bounded by $nJ$.

Applying the probabilistic Salem-Zygmund inequality \cite{Kol3} shows that with large probability
$$
(2.9) \lesssim \sqrt{\log \, n J} \Big[\sum^n_{\ell =1} \Big|\sum^J_{j =1}
\frac {\rho^j}j \cos 2\pi j\ell\theta\Big|^2\Big]^{\frac 12}.\eqno{(2.10)}
$$

Our next task is to evaluate the expression $\sum^n_{\ell=1}  \Big|\sum^J_{j =1}
\frac {\rho^j}j \cos 2\pi j\ell\theta\Big|^2$.

A first observation is that we can assume
$$
\Vert\theta\Vert>\frac 1{10n}\eqno{(2.11)}
$$
since otherwise
$$
|1-e^{2\pi i a_k\theta}|\leq 2\pi a_k\Vert\theta\Vert<\frac {2\pi}{10}<1
$$
for all $k=1, \ldots, m$, and also the left hand side of (2.4) is bounded by $1$.

Next, we note that (since $\rho =1-\frac 1{\sqrt J}$)
$$
\begin{aligned}
\Big|\sum^J_{j=1} \frac {\rho^j} j\cos 2\pi j\ell\theta\Big| &\leq \big|\log| 1-\rho
e(\ell\theta)|\big|+\frac {\rho^J}{J(1-\rho)}\\
&<\big|\log| 1-\rho e(\ell \theta)|\big|+1.
\end{aligned}
$$
Hence
$$
\sum^n_{\ell =1} \Big|\sum^J_{j =1}
\frac {\rho^j}j \cos 2\pi j\ell\theta\Big|^2 \lesssim \sum^n_{\ell=1} \big|\log |1-\rho e(\ell
\theta)|\big|^2+n.\eqno{(2.12)}
$$
Fix $\theta $ and for $1<R\lesssim \log J$ define  the dyadic set
$$
S_R=\{1\leq\ell \leq n: \big|\log |1-\rho e(\ell \theta)|\big|\sim R\}.
$$
Thus for $\ell \in S_R$
$$
\Vert\ell\theta\Vert<|1-\rho e(\ell\theta)|<e^{-cR}=:\ve.
$$
Let $q\in\mathbb N$ be the smallest integer with $\Vert q\theta\Vert <2\ve$.
It follows that $|S_R|\lesssim\frac nq+1$.
Assuming $q>R^3$, one obtains
$$
\sum_{\ell \in S_R} \big|\log|1-\rho e(\ell\theta)|\big|^2 \lesssim \Big(\frac n{R^3}+1\Big)R^2
$$
with collected contribution (summing over dyadic $R$)
$$
\sim n+(\log J)^2.\eqno{(2.13)}
$$

It remains to consider $\theta$'s with the property that for some large $R$ and $q<R^3$,
$$
\Vert q\theta\Vert < e^{-cR}.
$$
Hence either $\theta$ admits a rational approximation
$$
\Big\vert\theta-\frac aq\Big|< \frac{e^{-cR}}q< e^{-cR}, \;\;q<R^3\; \text{ and }\; (a, q)=1\eqno{(2.14)}
$$
or \big(in (2.14) when $a=0$\big), by (2.11)
$$
\frac 1n\lesssim \Vert\theta\Vert< e^{-cR}.\eqno{(2.15)}
$$
Consider first the case (2.15). Then
$$
|S_R|\leq |\{\ell = 1, \ldots, n:\Vert\ell\theta\Vert <e^{-cR}\}|\lesssim n e^{-cR}
$$
and the above estimate still holds.

Assume next that $\theta$ satisfies (2.14). Write
$$
\theta=\frac aq +\psi \ \text { with } \ \beta=|\psi|< e^{-cR}.\eqno{(2.16)}
$$

\noindent First, we consider the case $\beta\gtrsim \frac 1{nq}$.

Let $V\subset \{ 1, \ldots, n\}$ be an interval of size $\sim \frac 1{q\beta}$ so that $\{\ell\theta: \ell \in
V\}$ consists of $q\beta$-separated points filling a fraction of $[0, 1]$ (mod 1). Hence
$$
\begin{aligned}
\sum_{\ell\in V} \big|\log |1-\rho e(\ell\theta)|\big|^2 &\lesssim \frac 1{\beta q} \int^1_0 \big|\log |1-\rho
e(t)|\big|^2 dt+\log ^2 (1-\rho)\\
&\lesssim \frac 1{\beta q} +\log^2J
\end{aligned}
$$
and
$$
\sum^n_{\ell=1} \big|\log| 1-\rho e(\ell\theta)|\big|^2 \lesssim n+ nq\,\beta\log^2 n\lesssim n
$$
unless
$$
q\beta\log^2 n>1, \text { i.e. } \log n> e^{cR} \ \text { or } \ R\lesssim \log\log n
$$
where we used (2.14).
Thus if $\beta \gtrsim \frac 1{nq}, (2.12) \lesssim n(\log\log n)^2$.

\noindent The next case is $\beta<\frac 1{100nq}$.

It follows that for $1\leq \ell \leq n$
$$
\Big|\ell\theta -\frac {\ell a} q\Big| <\frac 1{100 q}.\eqno{(2.17)}
$$
We obtain
$$
\sum_{q\nmid \ell}\big|\log|1-\rho e(\ell\theta) |\big|^2 \lesssim n\int^1_0 \big|\log |1-\rho e(t)|\big|^2
dt\lesssim n
$$
and
$$
\begin{aligned}
\sum_{q|\ell}\big|\log |1-\rho e(\ell\theta)|\big|^2\sim&\; \frac 1{q\beta} \int_0^{n\beta} \big|\log|1-\rho
e(t)|\big|^2 dt\\\leq\:
&\frac 1{q\beta}\int_0^{n\beta} \Big(\log\frac 1t\Big)^2 dt\\\lesssim\;
&\frac nq (\log n\beta)^2.
\end{aligned}
\eqno{(2.18)}
$$

We obtain again a bound $O(n)$ unless
$$
|\log n\beta|>\sqrt q
$$
i.e.
$$
\beta< \frac{ e^{-\sqrt q}}n.\eqno{(2.19)}
$$
Thus (2.17) may be replaced by
$$
\Big|\ell\theta -\ell\frac aq\Big| < e^{-\sqrt q} \ \text { for } \ 1\leq \ell\leq n.\eqno{(2.20)}
$$
For $\theta$ satisfying (2.20) we proceed in a different way. Write
$$
\begin{aligned}
\prod |1-e(a_k\theta)|&= \prod^n_{j=1} |1-e(j\theta)|^{\xi_j}\\
&\lesssim \prod^n_{j=1} \Big(\Big|1-e\Big(j\frac aq\Big)\Big| + \frac 1{q^{10}}\Big)^{\xi_j}.
\end{aligned}
\eqno{(2.21)}
$$
We replace $\xi_j$ by its expectation $\mathbb E[\xi_j] = 1-\frac jn$ using again a random argument.
Thus if
$$
\prod^n_{j=1} \Big(\Big| 1-e\Big(j\frac aq\Big)\Big| +\frac 1{q^{10}}\Big)^{1-\frac jn}\eqno{(2.22)}
$$
we have
$$
|\log (2.21)- \log (2.22) |\leq \bigg|\sum^n_{j=1} \Big(\xi_j-\mathbb E[\xi_j]\Big) \log \Big( \Big|1-e\Big(j\frac
aq\Big)\Big|+\frac 1{q^{10}}\Big)\bigg|.\eqno{(2.23)}
$$
Recall that $q<R^3 \lesssim (\log J)^3\sim (\log n)^3$.
Thus with high probability we may bound (2.23) by $c\sqrt n$ $\sqrt{\log\log n}\log q< c\sqrt n(\log\log n)^3$.

Hence
$$
(2.21)\;\leq \;e^{c\sqrt n(\log\log n)^3}(2.22).
$$
Partition $\{1, \ldots, n\}$ in intervals $I=[rq, (r+1)q-1]$ and estimate for each such interval
$$
\begin{aligned}
&\;\prod_{j\in I} \Big(\Big|1-e\Big(j\frac aq\Big)\Big|+ \frac 1{q^{10}}\Big)^{1-\frac jn}\\
\leq&\;q^{c\frac {q^2}n} \Big[\frac 1{q^{10}} \prod_{s=1}^{q-1} \Big(\Big|1-e\Big(s\frac aq\Big)\Big|+\frac
1{q^{10}}\Big]^{1-\frac {rq}n}\\
\leq&\;q^{c\frac {q^2}{n}} \Big[\frac 1{q^{10}} \prod_{s=1}^{q-1}\Big|1-e\Big(\frac sq\Big)\Big|\Big]^{1-\frac
{rq}n}.
\end{aligned}
\eqno{(2.24)}
$$

The product $\prod_{s=1}^{q-1} \big|1-e\big(\frac sq \big)\big|$ may be evaluated using Lemma 2.1 taking $J=q^2, \rho =1-\frac 1q$.
Thus clearly
$$
\begin{aligned}
\sum^{q-1}_{s=1} \log \Big| 1-e\Big(\frac sq\Big)\Big| &\leq -\sum^J_{j=1} \frac {\rho^j} j
\sum_{s=1}^{q-1} \cos 2\pi j\frac sq +O(1)\\
&\leq\sum_{\substack{1\leq j\leq J\\ q\nmid j}} \frac {\rho^j}j +q \sum_{\substack{1\leq j\leq J\\ q|j}}
\frac {\rho^j}j+O(1)\\
&< \log q+C
\end{aligned}
$$
implying that
$$
(2.24)< q^{c\frac {q^2}n} \Big(\frac 1{q^{10}} e^{\log q+c}\Big) ^{1-\frac {rq}n} < q^{c\frac {q^2}n}.
\eqno{(2.25)}
$$

Since (2.22) is obtained as product of (2.24), (2.25) over the intervals $I$, we showed that
$$
(2.22) < q^{c\frac {q^2}n n} 2^q < e^{(\log n)^3}.
$$
Thus the preceding shows that if $\theta$ satisfies (2.20), then
$$
\prod |1-e(a_k\theta)|< e^{c\sqrt n(\log\log n)^3}.\eqno{(2.26)}
$$
Going back to (2.10), omitting the case (2.20) estimated by (2.26), we obtained the bound $cn(\log\log n)^2$ on
(2.12) which permits to majorize (2.8) by $c\sqrt{n\log n} (\log\log n)$ and \hfill\break
$\prod |1-e(a_k\theta)|$ by
$e^{c\sqrt{n\log n}\log\log n}$.
This completes the proof of Proposition 2.2.

\section
{\bf Almost full proportion}

It was observed in \cite {E-S} that
$$
\lim_{n\to\infty}M (1, \ldots, n)^{\frac 1n}\eqno{(3.1)}
$$
exists and lies strictly between 1 and 2.

This fact is in contrast with Proposition 2.2 which gives a subset $S\subset \{1, \ldots, n\}, |S|\asymp\frac n2$ s.t.
$$
\log M(S) \lesssim \sqrt n(\log n)^{\frac 12}\log\log n.\eqno{(3.2)}
$$
However

\begin{proposition}
There is a constant $\tau>0$ such that if $S\subset \{1, \ldots, n\}$ satisfies $|S|>(1-\tau)n$, then
$$
\log M(S)>cn\eqno{(3.3)}
$$
for some $c>0$.
\end{proposition}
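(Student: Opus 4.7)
The plan is to bootstrap from the \cite{E-S} result (3.1) recalled at the start of this section, namely that $M(1,\ldots,n)^{1/n}\to L$ for some fixed $L\in(1,2)$, so that $\log M(1,\ldots,n) \geq c_0 n$ for some absolute $c_0=\log L-o(1)>0$ and all large $n$. Writing $T=\{1,\ldots,n\}\setminus S$, which satisfies $|T|<\tau n$, one has the pointwise identity
\[
\prod_{a\in S}|1-z^a| \;=\; \frac{\prod_{a=1}^n |1-z^a|}{\prod_{a\in T}|1-z^a|}
\]
valid at any $z$ on the unit circle where none of the factors vanishes.

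The next step is to evaluate this identity at a point $z_0$ on the unit circle at which the numerator attains its maximum $M(1,\ldots,n)$; since that maximum is positive, no factor in the denominator vanishes at $z_0$ either. Using the trivial bound $|1-w|\leq 2$ for $|w|=1$ on each of the at most $\tau n$ factors in the denominator, one obtains
\[
M(S) \;\geq\; \prod_{a\in S}|1-z_0^a| \;\geq\; \frac{M(1,\ldots,n)}{2^{|T|}} \;\geq\; \frac{M(1,\ldots,n)}{2^{\tau n}}.
\]
Taking logarithms yields $\log M(S) \geq c_0 n - \tau n\log 2$, which produces (3.3) with $c=c_0/2$ as soon as one chooses $\tau < c_0/(2\log 2)$.

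I do not anticipate any real obstacle here: the proposition is essentially a stability statement around the limit (3.1), and the crude $|1-w|\leq 2$ bound on the small complementary set is enough precisely because $L>1$ leaves room for a small loss in the exponential rate. The one point that should be checked carefully is that (3.1) is available in the effective form $\log M(1,\ldots,n)\geq c_0 n$ uniformly for large $n$, but this is immediate from the existence of the limit together with the strict inequality $L>1$. If one preferred to avoid invoking (3.1) at all, an alternative would be to exhibit an explicit test point (for instance $z_0 = e^{2\pi i/(Cn)}$ for a suitable constant $C$) and verify directly that $\prod_{a=1}^n|1-z_0^a|$ is exponentially large; the reduction above is however considerably shorter and matches the spirit of the remark made after (3.1).
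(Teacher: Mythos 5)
Your argument is correct, but it takes a genuinely different route from the paper. You bootstrap from the Erd\H os--Szekeres fact (3.1) that $\lim_n M(1,\ldots,n)^{1/n}=L$ is strictly larger than $1$, i.e.\ the effective bound $\log M(1,\ldots,n)\geq c_0 n$, and then observe that discarding the at most $\tau n$ frequencies in $T=\{1,\ldots,n\}\setminus S$ costs at most a factor $2^{\tau n}$ at the maximizer $z_0$ of the full product (which is nonzero there, so the division is legitimate); choosing $\tau<c_0/(2\log 2)$ gives (3.3), with the finitely many small $n$ handled by $M(S)\geq\sqrt{2|S|}>1$ and shrinking $c$. The paper does not argue this way: its proof is self-contained and does not invoke (3.1) at all. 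It uses Fact 2 with a F\'ejer-kernel convolution $\mu=F_{nR}$ to truncate the $k$-sum in $\log\prod_{j\in S}|1-e(ja_j\theta)|$, evaluates at the explicit point $\theta=3/(4n)$ via the Dirichlet kernel to get a main term $\sim cn$ from the $k=1$ contribution, and absorbs both the tail $k>k_0$ (cost $O(Rn/k_0)$) and the missing elements of $S$ (cost $O(\tau n\log k_0)$), choosing first $k_0$ large and then $\tau$ small. So the paper in effect reproves, quantitatively and with an explicit test point, the lower half of (3.1) simultaneously with its stability under deleting $\tau n$ frequencies — which is the content of its remark that (3.3) ``generalizes (3.1)'' — whereas your shorter argument outsources exactly that nontrivial input to the cited literature. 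Since (3.1) is stated in the paper as known (and is indeed classical, the limit being Sudler's constant $\approx 1.22$), your deduction is valid; what the paper's longer route buys is independence from that external result and an explicit, effective constant, and it matches the alternative you sketch at the end (an explicit $z_0=e(c/n)$), except that the F\'ejer smoothing is what makes the infinite $k$-sum controllable there.
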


Thus (3.3) generalizes (3.1) in some sense, but in view of (3.2), it fails dramatically  if we do not assume $1-\frac {|S|}n$ small enough.

\noindent
{\bf Proof of Proposition 3.1.}

It will be convenient to use Fact 2  for an appropriate $\mu$-convolution, which allow us to estimate the tail
 contribution in the $k$-summation.

Thus consider
$$
\begin{aligned}
&-\min_\theta \Big\{\sum_{j\in S} \, \sum_{k=1}^\infty \frac {\cos 2\pi kj\cdot}k *\mu\Big\}(\theta)\\[5pt]
= &-\min_\theta\sum_{k=1}^{\infty} \, \sum_{j\in S} \, \frac {\hat\mu(jk)}k \cos 2\pi kj\theta\\[5pt]
\end{aligned}$$
$${}\qquad\qquad\qquad\qquad\geq  -\min_\theta \sum^{k_0}_{k=1} \, \sum^n_{j=1} \, \frac {\hat \mu (jk)}k \cos 2\pi kj\theta\qquad \qquad\qquad\;\;\;\quad\eqno{(3.4)}$$
$${}\quad-(\log k_0)\pi n\qquad \qquad\qquad\qquad\qquad
$$
$$
{}\qquad\quad-\sum_{k>k_0} \, \sum^n_{j=1} \, \frac {|\hat \mu(jk)|}k \qquad \qquad\qquad\qquad\qquad \eqno{(3.5)}
$$
since we assumed $|S|> (1-\tau)n$.

Separating in (3.4) the cases $k=1,$ and $ 2\leq k\leq k_0$, we write
$$
\begin{aligned}
(3.4)\geq &-\Big(\sum^n_{j=1} \cos 2\pi j\theta\Big) -\sum^n_{j=1} |1-\hat\mu (j)|\\
&-\sum^{k_0}_{k=2} \, \frac 1k\Big|\sum^n_{j=1} \, \hat \mu (jk) \cos 2\pi kj\theta\Big|.
\end{aligned}
\eqno{(3.6)}
$$
Take $\mu= F_{nR}(\theta)$, $R>1$ an appropriate constant and $F_{nR}(\theta)$ the F\'ejer kernel.

Thus
$$
\begin{aligned}
\widehat F_{nR} (s) &=1-\frac {|s|}{nR} \;\; \text { for } \ |s|\leq nR\\
&=0 \;\;\;\;\;\;\;\;\;\;\;\;\text { otherwise}.
\end{aligned}
$$
Take $\theta =\frac 3{4n}$.
The first term in (3.6) becomes, since
$$
\sum^n_{j= 1} \cos jx =\frac 12 D_n(x) -\frac 12, \;\;\;\text{where } \;D_n(x)=\frac {\sin (n+\frac 12)x}{\sin\frac x2}
$$
is the Dirichlet kernel,
$$
\frac 12-\frac 12 \frac {\sin \frac {3\pi}{2n}(n+\frac 12)}{\sin \frac {3\pi}{4n}}
\sim + \frac 1{2\sin \frac {3\pi}{4n}}.
$$
The second term is
$$
-\sum^n_{j=1} \, \frac j{nR} =- \frac {n+1}{2R}.
$$
The third term becomes
$$
-\sum^{k_0}_{k=2} \frac 1k\Big|\sum^n_{j=1} \Big(1-\frac {jk}{nR}\Big)_+ \cos \pi \frac {3kj}{2n}\Big|.\eqno{(3.7)}
$$
By partial summation, the inner sum is bounded by
$$
\begin{aligned}
&\max_{j_1\leq \min (n, \frac {nR}k)} \Big|\sum^{j_1}_{j=1} \cos \pi \frac {3kj}{2n}\Big|\\[5pt]
=&\max_{j_1 \leq \min (n, \frac {nR}k)} \Big|\frac 12 D_{j_1} \Big(\frac 32 \pi \frac kn\Big) -\frac 12\Big|\\[5pt]
\leq &\;\frac 1{2|\sin \frac 34 \pi \frac kn|} +\frac 12.
\end{aligned}
$$
For $k<k_0 =o(n)$, the first term
$$
\sim\frac 1{2k\sin\frac {3\pi}{4n}}.
$$
Hence
$$
\begin{aligned}
(3.7) &\geq -\sum^{k_0}_{k=2} \, \frac 1{2k^2} \, \frac 1{\sin \frac {3\pi}{4n}}-\log k_0\\
&\geq -\frac 1{2\sin \frac {3\pi}{4n}} \Big(\frac {\pi^2}6 -1\Big) -\log k_0.
\end{aligned}
$$
It follows from the preceding that
$$
\begin{aligned}
(3.4) &\geq +\frac 1{2\sin \frac {3\pi}{4n}} \Big(2-\frac {\pi^2}6\Big) -\log k_0-\frac {n+1}{2R}\\
& =cn-\log k_0
\end{aligned}
$$
for $R$ a sufficiently large constant.

We bound (3.5) by
$$
(3.5) \geq -\sum_{k\geq k_0} \, \frac 1k \, \sum_{j\leq \frac {nR}k} 1\geq -\sum_{k\geq k_0} \, \frac {nR}{k^2} \geq-\frac R{k_0}n.
$$

In summary, we proved that
$$
-\sum_{j\in S} \, \sum^\infty_{k=1} \, \frac {\hat\mu (jk)}{k} \cos 2\pi jk\, \frac 3{4n}
\geq cn -\log k_0 -\tau (\log k_0)n-\frac {C'n}{k_0}
>\frac c2 n
$$
be choosing first $k_0$ large enough and then assuming $\tau$ sufficiently small.

This proves Proposition 3.1.

\section
{\bf Sets with large arithmetical Diameter}

As we pointed out the general lower bound $M(a_1, \ldots, a_n)>\sqrt n$ remains unimproved.
However Proposition 4.1 stated below shows that in certain cases one can do better.

First, we give the following definition.

\noindent{\bf Definition.}
{\sl $D=\{v_1, \ldots, v_m\}\subset\mathbb Z$ is called dissociated provided the relation
$$
\ve_1 v_1+\cdots +\ve_mv_m =0\;\;\;\ \text { with } \ \ve_i=0, 1, -1
$$
implies that $\ve_1=\cdots=\ve_m=0$.}

\medskip

We note that Hadamard lacunary sets are dissociated.

\begin{proposition}
Assume $S=\{a, \ldots, a_n\} $ contains a dissociated set $D$ of size $m$.
Then
$$
\log M(a_1, \ldots, a_n)\gg \frac {m^{\frac 12 -o(1)}}{(\log n)^{\frac 12}}.\eqno{(4.1)}
$$
\end{proposition}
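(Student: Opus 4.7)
The plan is to apply Fact 2 with a probability measure $\mu$ built from a Riesz product on the dissociated set $D$, and to use Rudin's inequality for dissociated sets to control both the main term and the error.

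First I would form the Riesz product
\[
\mu(\theta)=\prod_{v\in D}(1-\lambda\cos 2\pi v\theta),\qquad\lambda\in(0,1],
\]
which, by dissociativity of $D$, satisfies $\mu\geq 0$, $\int\mu=1$, and has clean Fourier coefficients
\[
\hat\mu(s)=(-\lambda/2)^{|\epsilon(s)|}\quad\text{if }s=\sum_{v\in D}\epsilon_v v\text{ with }\epsilon\in\{-1,0,1\}^D,
\]
and $\hat\mu(s)=0$ otherwise. Applying Fact 2 together with Parseval gives
\[
\log M \geq -\min_\theta\sum_{j,k}\frac{\hat\mu(ka_j)}{k}\cos 2\pi ka_j\theta.
\]

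Next I would identify the point $\theta^*$ at which the right-hand side is most negative. The natural candidate is furnished by Paley--Zygmund applied to the polynomial $T_1(\theta)=\sum_{v\in D}\cos 2\pi v\theta$, which by Rudin's inequality $\|T_1\|_p\leq C\sqrt{pm}$ (combined with an interpolation between $L^1$ and $L^4$) has $\|T_1\|_1\gtrsim\sqrt m$, yielding a set of positive measure on which $T_1(\theta^*)\leq -c\sqrt m$. At such $\theta^*$, the contribution from pairs $(a_j,k)=(v,1)$ with $v\in D$ to the above sum equals $(\lambda/2)T_1(\theta^*)\leq -(c\lambda/2)\sqrt m$.

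The remaining contributions, i.e.\ the ``errors'' coming from $(j,k)\neq(v,1)$, need to be bounded. For such pairs, $\hat\mu(ka_j)\neq 0$ only if $ka_j$ lies in the sumset $E_D=\{\sum_{v\in D}\epsilon_v v\}$, and then $|\hat\mu(ka_j)|\leq (\lambda/2)^{|\epsilon(ka_j)|}$. I would split the error by the complexity $t=|\epsilon(ka_j)|$, using $|\{s\in E_D:|\epsilon(s)|=t\}|\leq\binom{m}{t}2^t$ and the divisor estimate $\sum_{(j,k):ka_j=s}1/k\leq\sigma(s)/s=n^{o(1)}$. Particular care is required when $ka_j$ coincides (via a multiplicative relation) with an element of $\pm D$, since in that case $|\epsilon|=1$ and the naive bound on the error is insufficient; such collisions are treated by either passing to a sub-dissociated $D'\subseteq D$ free of such relations, or by separating those finitely many troublesome pairs. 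Balancing the main term against the error, and using Rudin's inequality at the optimal index $p=\log n$ to convert the $L^p$-bound into a positive-measure statement, yields
\[
\log M\gg m^{1/2-\ve}/\sqrt{\log n}.
\]
The factor $1/\sqrt{\log n}$ is the $L^p\to L^\infty$ loss from this conversion, and the $m^{-\ve}$ absorbs the divisor-theoretic losses.

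The main obstacle is controlling the errors arising from the multiplicative interaction between $\{ka_j\}_{j,k}$ and the sumset $E_D$. A crude bound via $\sum_{s\in E_D}|\hat\mu(s)|\leq (1+\lambda)^m$ grows exponentially in $\lambda m$, so $\lambda$ must be kept small, which then reduces the effective main term. The refined analysis needed is to exploit that most multiples $ka_j$ in fact lie outside $E_D$, and that the restricted error sum is controlled by a combination of Rudin's inequality (to bound $L^2$-like quantities) and the divisor bound. Handling the multiplicative collisions $ka_j\in\pm D$ (as occur, e.g., in Hadamard lacunary sequences) is the most delicate point and is what forces the $\ve$-loss in the exponent and the $(\log n)^{1/2}$ loss in the denominator.
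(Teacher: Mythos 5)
Your plan is a Riesz-product argument: convolve with $\mu=\prod_{v\in D}(1\pm\lambda\cos 2\pi v\theta)$ so that only frequencies $ka_j$ landing in the sumset $E_D=\{\sum\epsilon_v v\}$ survive, take the main term from $(a_j,k)=(v,1)$ at a point where $\sum_{v\in D}\cos 2\pi v\theta\le -c\sqrt m$, and control the rest. The framework (Fact 2 plus Rudin/Paley--Zygmund for the main term) is coherent, but the step you yourself flag as ``the refined analysis needed'' is precisely the crux, and nothing you propose closes it. The set $S$ is adversarial: it only has to \emph{contain} $D$, and may also contain, say, all pairwise sums $v_i+v_{i'}$ (then $n\sim m^2$, $\log n\sim 2\log m$). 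In that configuration the level-two error $(\lambda/2)^2\sum_{i<i'}\cos 2\pi(v_i+v_{i'})\theta$ has $L^2$-norm of order $\lambda^2 m$, which swamps the main term $\lambda\sqrt m$ for any fixed $\lambda$, and shrinking $\lambda$ to tame it kills the main term; moreover at the very points where $T_1\le -c\sqrt m$ one has $T_1^2\ge c^2m$, so the sum-frequency block is if anything pushed in the unfavourable (positive) direction, and no cancellation argument is offered. The divisor bound $\sigma(s)/s$, the count $\binom mt 2^t$, and ``passing to a sub-dissociated $D'$'' do not help here (the $|\epsilon|=1$ collisions $ka_j=v$ are determined by divisors of $v$ inside $S$, not inside $D$, so shrinking $D$ removes nothing; and they are not ``finitely many'' in any useful sense -- though those particular terms merely reweight the main frequencies and are the least of the problems). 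Also, the claimed source of the $(\log n)^{-1/2}$ (``Rudin at $p=\log n$ to get positive measure'') is not a real mechanism: an $L^1$--$L^2$--$L^4$ argument already gives a set of measure bounded below by a constant where $T_1\le -c\sqrt m$, and no $\sqrt{\log n}$ appears that way.

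For comparison, the paper does not choose a measure adapted to $D$ at all. It reduces (4.1) to a lower bound on $\Vert F\Vert_1$, $F(\theta)=\sum_j\log|1-e(a_j\theta)|$, and applies a \emph{truncated M\"obius inversion} in the harmonic variable: $\sum_{d<r}\mu(d)F(d\theta)/d=-f(\theta)+G(\theta)$ with $f(\theta)=\sum_j\cos 2\pi a_j\theta$. The multiplicative--additive interaction you struggle with is handled there by proving $\Vert\hat G\Vert_\infty\lesssim 1/\log r$, using the Hildebrand--Tenenbaum smooth-number bound together with a combinatorial extraction: if too many $a_j$ divide a given $t$ with comparable quotients, one builds a dissociated subset of size exceeding $m$, a contradiction (this is where dissociativity enters, with $\log r\sim m^{2\alpha}$). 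Then the Fourier coefficient of the inverted expression at each $a_j$ is $-\tfrac12+o(1)$, and testing against $m^{-1/2}\sum_{j\in D}e(j\theta)$ in the Orlicz duality $(L^{\Phi},L^{\Psi})$, $\Phi(x)=x\sqrt{\log(2+x)}$, $\Psi(x)=e^{x^2}$ (Rudin's theorem for the dissociated side), gives $\sqrt m\lesssim (\log n)^{1/2}\,m^{2\alpha}\,\Vert F\Vert_1$; the $(\log n)^{1/2}$ comes from the $L\sqrt{\log L}$ estimate, not from any $L^p$ evaluation. So your route is genuinely different, but as it stands it has a gap exactly where the paper deploys its main new ideas, and I see no indication the Riesz-product error can be salvaged without an input of comparable strength.
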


Thus (4.1) improves the general lower bound from \cite {E-S} provided $m>(\log n)^{3+\ve}$.

\noindent
{\bf Remark.}
By a result of Pisier \cite{P}, our assumption is equivalent to $S$ containing a Sidon set $\Lambda$ of size $|\Lambda|\sim m$.
Here `Sidon set' is in the harmonic analysis sense i.e.
$$
\Big\Vert\sum_{n\in\Lambda} \lambda_n e(n\theta)\Big\Vert_\infty \geq c\sum|\lambda_n| \text { for all scalars $\{\lambda _n$\}}
$$
with $c=c(\Lambda)$ to be considered as a constant. (This concept is  different from the Sidon sets in combinatorics!).

Dissociated sets are Sidon and conversely, Pisier proved that if $\Lambda$ is a finite Sidon set, then $\Lambda$ contains
a proportional dissociated set.

\noindent{\bf Proof of Proposition 4.1.}

We derive (4.1) from the equivalent statement
$$
\max_\theta \big(\log |1-e(a_1 \theta)|+ \cdots+ \log |1-e(a_n\theta)|\big) \gg \frac {m^{\frac 12-o(1)}}{(\log n)^{1/2}}\eqno{(4.2)}
$$
which, since $\int\log |1-e(a\theta)|=0$ for $a\in\mathbb Z\backslash \{0\}$, is a consequence of the stronger claim that
$$
\Vert F\Vert_1 \gg \frac {m^{\frac 12-o(1)}}{(\log n)^{1/2}}\eqno{(4.3)}
$$
denoting
$$
F(\theta)=\log |1-e(a_1 \theta)|+\cdots+\log |1-e (a_n \theta)|.
$$
Recall that by Fact 1
$$
F(\theta)= -\sum^\infty_{k=1} \frac 1k f(k\theta)\eqno{(4.4)}
$$
with
$$
f(\theta)=\sum^n_{j=1} \cos (2\pi a_j\theta).
$$

We first perform a finite Mobius inversion on (4.4).
Recall that
$$
\sum_{\substack{d|k, d\leq r\\ d\text { square free}}} \mu (d) =\begin{cases}
1\quad\text { if } k=1\\ 0\quad\text { if } 1<k\leq r\end{cases}
$$
Hence
$$
\begin{aligned}
\sum_{\substack{ d< r\\ \text { square free}}} F(d\theta) \frac {\mu (d)}d& = -\sum^n_{j=1} \sum^\infty_{k=1} \sum_{\substack{d< r\\ \text { square free}}} \cos (2\pi a_j dk\theta) \frac {\mu(d)}{dk}
\\[5pt]&=-\sum^n_{j=1} \sum^\infty_{\ell =1} \frac {\cos (2\pi  a_j\ell\theta)} \ell \bigg[\sum_{\substack{d|\ell, d<r\\ \text { square free}}}\mu(d)\bigg]\\[5pt]
& = - f(\theta) -\sum^n_{j=1} \sum_{\ell>r} \frac {\cos (2\pi a_j\ell\theta)}\ell \bigg[\sum_{\substack{d|\ell, d< r\\ \text { square free}}} \mu(d)\bigg]
\\[5pt]&=-f(\theta)+G(\theta),
\end{aligned}\eqno{(4.5)}
$$
where$$G(\theta)=-\sum^n_{j=1} \sum_{\ell>r} \frac {\cos (2\pi a_j\ell\theta)}\ell \bigg[\sum_{\substack{d|\ell, d< r\\ \text { square free}}} \mu(d)\bigg].$$
Note also that
$$
\Big| \sum_{\substack{d|\ell, d<r\\ \text { square free}}} \mu(d)\Big|\leq  2^{\omega(\ell)},\eqno{(4.6)}
$$
where $\omega(\ell)$ is the number of distinct prime factors of $\ell$.

Denote $m$ the size of the largest dissociated set contained in $\{a_1, \ldots, a_n\}$.
Our first task will be to bound the Fourier transform $\Vert\hat G\Vert_\infty$ of $G$.

Thus given $t\in\mathbb Z$, we have
$$
|\hat G(t)|\leq \frac 12\sum^n_{j=1} \frac {a_j}t 2^{\omega(\frac t{a_j})}.\eqno{(4.7)}
$$
We will bound (4.7) by considering dyadic ranges, letting for $K>r$ dyadic
$$
J=J_K=\{j\in[1, n]\;: \;a_j|t \;\;\text{ and } \;\;\frac t{a_j}\sim K\}.
$$
Thus
$$
\begin{aligned}
\sum_{j\in J} \frac {a_j}t 2^{\omega (\frac t{a_j})} & \leq \sqrt{\sum_{j\in J}\Big(\frac {a_j}t\Big)^2} \,
\Big(\sum_{k\leq K} 4^{\omega (k)}\Big)^{\frac 12}\\
&\lesssim |J|^{\frac 12} K^{-1} K^{\frac 12} (\log K)^{2} =\bigg(\frac {|J|}K\bigg)^{\frac 12} (\log K)^2.
\end{aligned}
\eqno{(4.8)}
$$
Assume
$$
|J|>\frac K{(\log K)^{8}}.\eqno{(4.9)}
$$
Our aim is to get a contradiction for appropriate choice of $r$.

At this point, we invoke the following result from \cite {H-T} (see $Fq$ {(1.14)}).

Denote
$$\psi(x, y)=\big|\{n\leq x :  \text{ if } \;p|n, \text{ then } p\leq y\}\big|.$$

\begin{lemma}
For any $0<\alpha<1$, we have
$$
\psi\big(x, (\log x)^{1/\alpha}\big) < x^{1-\alpha +o(1)} \ \text { for } \ x\to\infty.\eqno{(4.10)}
$$
\end{lemma}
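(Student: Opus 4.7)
The plan is to establish (4.10) via Rankin's trick, which is the classical route to upper bounds on the smooth-number counting function. For any $\sigma\in(0,1)$ one has the elementary inequality
$$
\psi(x,y)\le x^{\sigma}\sum_{\substack{n\ge 1\\ p\mid n\Rightarrow p\le y}} n^{-\sigma}=x^{\sigma}\prod_{p\le y}(1-p^{-\sigma})^{-1}.
$$
Taking logarithms and expanding $\log(1-p^{-\sigma})^{-1}=p^{-\sigma}+O(p^{-2\sigma})$ reduces the task to estimating $\sum_{p\le y}p^{-\sigma}$, which by the prime number theorem behaves, for $\sigma$ close to $1$, like $y^{1-\sigma}/((1-\sigma)\log y)$ up to lower order terms. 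This yields
$$
\log\psi(x,y)\le \sigma\log x+\frac{y^{1-\sigma}}{(1-\sigma)\log y}+O\Big(\frac{y^{1-\sigma}}{(1-\sigma)^{2}(\log y)^{2}}\Big).
$$

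Next I would optimize in $\sigma$. Setting $u=\log x/\log y$, the choice $1-\sigma=(\log u)/\log y$ balances the two main terms and produces the classical de Bruijn bound
$$
\psi(x,y)\le x\cdot u^{-u(1+o(1))}\qquad(u\to\infty).
$$
This is the technical heart of the argument; the optimization is delicate because $\sigma$ lies in the narrow window $1-\sigma\asymp(\log u)/\log y$, and one must keep the error terms uniform in the range of $y$ that concerns us.

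Finally I would specialize to $y=(\log x)^{1/\alpha}$, so that $\log y=\alpha^{-1}\log\log x$ and $u=\alpha\,\log x/\log\log x\to\infty$. A direct calculation gives
$$
u\log u=\alpha\,\frac{\log x}{\log\log x}\Big(\log\log x-\log\log\log x+\log\alpha\Big)=\alpha\log x\cdot(1+o(1)),
$$
so $u^{-u(1+o(1))}=x^{-\alpha+o(1)}$, and the desired bound $\psi(x,(\log x)^{1/\alpha})\le x^{1-\alpha+o(1)}$ follows. The main obstacle is the intermediate de Bruijn-type estimate; should one wish to bypass the Rankin calculation, one can alternatively invoke Hildebrand's asymptotic $\psi(x,y)\sim x\rho(u)$ in the relevant range together with the classical $\rho(u)=u^{-u(1+o(1))}$, reducing the proof to the same final substitution.
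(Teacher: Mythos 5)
Your main argument is correct, but it is worth noting that the paper does not prove this lemma at all: it simply quotes it as estimate (1.14) of Hildebrand--Tenenbaum \cite{H-T}, namely $\psi(x,y)=x\,u^{-u(1+o(1))}$ (equivalently $x\rho(u)^{1+o(1)}$) uniformly for $y\geq(\log x)^{1+\varepsilon}$, and then substitutes $y=(\log x)^{1/\alpha}$, $u\asymp\alpha\log x/\log\log x$, exactly as in your final paragraph. So your proposal supplies a self-contained proof where the paper relies on a citation, and the Rankin computation you outline is the standard and correct way to do this: the bound $\psi(x,y)\leq x^{\sigma}\prod_{p\leq y}(1-p^{-\sigma})^{-1}$, the estimate $\sum_{p\leq y}p^{-\sigma}\ll y^{1-\sigma}/((1-\sigma)\log y)$ (Chebyshev plus partial summation suffices), and the choice $1-\sigma=\log u/\log y$ give $\log\psi\leq\log x-u\log u+O(u/\log u)$; note that here $1-\sigma=\alpha(1+o(1))$ is bounded away from $0$, all error terms (including $\sum_{p\leq y}p^{-2\sigma}\leq\sum_{p\leq y}p^{-\sigma}$) are $o(u\log u)$, and the final substitution $u\log u=\alpha\log x\,(1+o(1))$ is exactly right. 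One caution about your suggested shortcut at the end: Hildebrand's asymptotic $\psi(x,y)\sim x\rho(u)$ is only known for $y\geq\exp\big((\log\log x)^{5/3+\varepsilon}\big)$, and for $y$ a fixed power of $\log x$ (the range needed here) it is not available unconditionally -- indeed the asymptotic fails for $y\leq(\log x)^{2-\varepsilon}$ -- so that route does not work as stated; what does hold, and what the paper actually invokes, is the weaker logarithmic-scale estimate $\psi(x,y)=x\,u^{-u(1+o(1))}$ valid down to $y\geq(\log x)^{1+\varepsilon}$, which your Rankin argument proves directly.
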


It follows from (4.9) that for any fixed $1>\alpha>0$, we have
$$
|J|>2\psi \big(K, (\log K)^{\frac 1\alpha}\big).\eqno{(4.11)}
$$
We make the following construction.

By (4.11), there is $j_1\in J$ such that $\frac t{a_{j_1}}$ has a prime divisor $p_1 >(\log K)^{\frac 1\alpha}$ and
we write $\frac t{a_{j_1}} = p_1 b_1$.

Next, let $J_1 =\{j\in J : p_1|\frac t{a_j}\;\}$.
Hence $|J_1|< \frac K{p_1} +1< \frac K{(\log K)^{\frac 1\alpha}} <\frac {|J|}{(\log K)^{\frac 1\alpha - 8}}$ where
we assume $\alpha$ taken much smaller than $\frac 1{8}$.

It follows that also
$$
|J\backslash J_1|>\Big(2-\frac 1{(\log K)^{\frac 1\alpha -8}}
\Big) \psi\big(K, (\log K)^{\frac 1\alpha}\big)
$$
which permits to introduce $j_2 \in J\backslash J_1$ and a prime $p_2>(\log K)^{\frac 1\alpha}$ such that $p_2|\frac t{a_{j_2}}$.
Write $\frac t{a_{j_2}} =p_2b_2$.
Clearly $p_2\not= p_1$ and $p_1\nmid b_2$.

The contribution of the process is clear.
We may introduce elements
$$
j_1, \ldots, j_s\in J \;\text{ with }\; s\gtrsim (\log K)^{\frac 1\alpha-8}
$$
and prime divisors $p_{s'}|\frac t{a_{j_{s'}}}.$ Write $ \frac t{a_{j_{s'}}}= p_{s'} b_{s'}$ such that $p_{s'} \nmid
\frac t{a_{j_{s''}}}$ for $s'<s''$. Hence $p_{s''}\not = p_{s'}$ for $s'\not= s''$ and
$$
p_{s'} \nmid b_{s''} \ \text { for } \ s'< s''.\eqno{(4.12)}
$$
We claim that the set $\{a_{j_1}, \ldots, a_{j_s}\}$ is dissociated.
Otherwise, there is a non-trivial relation
$$
\ve_1 a_{j_1}+\cdots+ \ve_{s} a_{j_s}=0 \qquad\text{with }  \ve_{s'} =0, 1, -1
$$
which by the preceding translates in
$$
\ve_1\frac 1{p_1b_1}+\cdots + \ve_s \frac 1{p_sb_s}=0
$$
or
$$
\sum^s_{s'=1} \ve_{s'} \prod_{s'' \not= s'} p_{s''} b_{s''} = 0.
$$
Let $s_1$ be the smallest $s'$ with $\ve_{s'}\not=0$. Then
$$
\sum^s_{s'=s_1} \ve_{s'} \prod_{\substack {s''\not=s'\\ s''\geq s_1}} p_{s''} b_{s''}= 0.\eqno{(4.13)}
$$
Since $$p_{s_1}\Big|\prod_{\substack{ s''\not= s'\\ s''\geq s_1}} p_{s''} b_{s''}\;\text{ for }\; s'>s_1,$$ identity (4.13) implies $$\;p_{s_1}\Big|\prod_{s''>s_1}
b_{s''},$$ contradicting (4.12).

Hence $\{a_{j_1}, \ldots, a_{j_s}\}$ is dissociated and by definition of $m$,
$$
s\leq m
$$
implying
$$
m\geq (\log K)^{\frac 1\alpha -8} \;\;\text{ and } \; \;\log r\le m^{\frac \alpha{1-8\alpha}}.
$$
Thus, by taking
$$
\log r\sim m^{2\alpha} \quad (\alpha \text { small enough})
$$we obtain a contradiction under assumption (4.9).

Hence
$$
|J_K|< \frac K{(\log K)^{8}} \;\;\ \text { for $K>r$}
$$
and summing (4.8) over dyadic ranges of $K>r$ gives the bound
$$
|\hat G(t)|<\sum_{\substack{K>r\\ \text {dyadic}}} \frac 1{(\log K)^2} \lesssim \frac 1{\log r}.\eqno{(4.14)}
$$
Consequently
$$
\widehat{(4.5)} (t) =- \hat f(t) +O\Big(\frac 1{\log r}\Big) =-\hat f(t) +o(1) \text { for all $t\in \mathbb Z$}.\eqno{(4.15)}
$$
Since
$$
\hat f(j) =\frac 12 ,
$$we have
$$
\widehat{(4.5)} (j) =-\frac 12 +o(1) .\eqno{(4.16)}$$
Next, let $D$ be a size $m$ dissociated set in $\{a_1, \ldots, a_n\}$.
Define
$$
\vp (\theta) =\frac 1{\sqrt m} \sum_{j\in D} \, e(j\theta).
$$
Also, let  $\Phi, \Psi$ be the dual Orliez  functions
$$
\Phi(x)= x\sqrt{\log (2+x)}\;\;\ \text {  and \, \;\;$\Psi(x) =e^{x^2}$}.
$$
It is well known (e.g. Theorem 3.1 in \cite{Rud}.) that
$$
\Vert\vp\Vert_{L^{\Psi}} =O(1).
$$
By (4.16)
$$
\Big(\frac 12- o(1)\Big) \sqrt m \leq \Big|\int_0^1 (4.5) \vp(\theta)d\theta\Big|\leq C\Vert(4.5)\Vert_{L^{\Phi}}\eqno{(4.17)}
$$
It remains to bound $\Vert(4.5)\Vert_{L^{\Phi}}$.

Estimate
$$
\begin{aligned}
&\int |(4.5)|\sqrt{\log (|(4.5)|+2)}\;d\theta\\ \leq\;\;
& \sum_{j>0} 2^{j/2} \, \int_{2^{2^{^{j-1}}}\leq \lambda \leq 2^{2^{^j}}} \mu(M) \;d\lambda,
\end{aligned}
\eqno{(4.18)}
$$
Where $M=\{\theta: (4.5)(\theta)>\lambda\}$ and $\mu$ is the measure.
Using the left hand side of (4.5), the $j$-summands is bounded by
$$
2^{j/2}\Vert(4.5)\Vert_1 \lesssim 2^{j/2} \log r \;\Vert F\Vert_1.\eqno{(4.19)}
$$
Also, let $\Psi_1(u)=e^u$. Then
$$
\begin{aligned}
\bigg\Vert\sum_{d\leq r} \ \frac {|F(d\theta)|}d\bigg\Vert_{L^{\Psi_1}}\leq
 (\log r) \Vert F\Vert_{L^{\Psi_1}}\lesssim n\log r,
\end{aligned}
$$
since $\Vert\log |1-e^{i\theta}|\, \Vert_{L^{\Psi_1}} <\infty$.

Thus also the bound
$$
\mu(M)\leq e^{-c \ \frac \lambda{n\log r}}
$$
implying the following bound for the $j$-summands
$$
2^{j/2} 2^{2^j} e^{-c \, \frac {2^{2^{j-1}}}{n\log r}}.\eqno{(4.20)}
$$
Hence
$$
(4.18) < \sum_j 2^{j/2} \min \Big((\log r)\Vert F\Vert_1, 2^{2^j} e^{-c\, \frac {2^{2^{j-1}}}{n\log r}}\Big).
$$

For $2^{2^{j-2}}<n\log r$, we get the contribution
$$
(\log n)^{\frac 12} \log r\Vert F\Vert_1.
$$
For $2^{2^{j-2}}\geq n\log r$, we bound by
$$
\begin{aligned}
&(n\log r)^{4+\epsilon} e^{-cn\log r} +(n\log r)^{4\cdot 2+\epsilon}\,  e^{-c(n(\log r))^3} +\cdots+ (n\log r)^{4\cdot 2^{u-1}+\epsilon} \, e^{-c(n\log r)^{2^u-1}}+\cdots\\
< \;&O (1).
\end{aligned}
$$
Hence
$$
\Vert(4.5)\Vert_{L^{\Phi}}\lesssim (4.18) < (\log n)^{\frac 12} m^{2\alpha} \Vert F\Vert_1\eqno{(4.21)}
$$
recalling above choice for $\log r$.

Returning to (4.17), we proved that
$$
\Big(\frac 12-o(1)\Big) m^{\frac 12-2\alpha} \lesssim (\log n)^{\frac 12} \Vert F\Vert_1
$$
hence
$$
\Vert F\Vert_1 \gtrsim m^{\frac 12-\ve} (\log n)^{-\frac 12}.
$$
This proves (4.3) and hence Proposition 4.1.

\section*{\bf Acknowledgment}

During the preparation of this paper, the first author was partially supported by the
NSF Grant~DMS~1301619 and the second author by the NSF Grant~DMS~1301608.

\end{document}